\numberwithin{equation}{section}
\theoremstyle{plain} 
\newtheorem{thm}{Theorem}
\numberwithin{thm}{section}
\newtheorem*{thm*}{Theorem}
\newtheorem{prop}[thm]{Proposition}
\theoremstyle{definition}
\newtheorem{definition}[thm]{Definition}
\newtheorem{remark}[thm]{Remark}
\newcommand{\A}{\mathbb{A}}
\newcommand{\C}{\mathbb{C}}
\renewcommand{\H}{\mathbb{H}}
\renewcommand{\P}{\mathbb{P}}
\newcommand{\Q}{\mathbb{Q}}
\newcommand{\R}{\mathbb{R}}
\newcommand{\Z}{\mathbb{Z}}
\newcommand{\cA}{\mathcal{A}}
\newcommand{\cE}{\mathcal{E}}
\newcommand{\cI}{\mathcal{I}}
\newcommand{\cL}{\mathcal{L}}
\newcommand{\cM}{\mathcal{M}}
\newcommand{\cN}{\mathcal{N}}
\newcommand{\cO}{\mathcal{O}}
\newcommand{\bb}{\mathbf{b}}
\newcommand{\bd}{\mathbf{d}}
\newcommand{\be}{\mathbf{e}}
\newcommand{\0}{\mathbf{0}}
\newcommand{\1}{\mathbf{1}}
\newcommand{\2}{\mathbf{2}}
\newcommand{\sD}{\mathscr{D}}
\newcommand{\DR}{\textup{DR}}
\newcommand{\Gr}{\textup{Gr}}
\title{Bott Vanishing via Hodge Theory}
\author{Chuanhao Wei}
\begin{document}
\begin{abstract}
    In this paper, we revise the Bott Vanishing on projective toric varieties by giving it an alternative proof with a condition that is compatible with the condition of Kawamata-Viehweg Vanishing. This proof can also be adapted to generalize Bott Vanishing to the setting using mixed Hodge modules. Lastly, we give a counter-example towards the relative Bott Vanishing for birational morphisms.
\end{abstract}
\maketitle

\section{Introduction}
Let $X$ be a complex projective manifold, with an ample line bundle $\cA$ on it. The renowned Kodaira-Akizuki-Nakano Vanishing Theorem states that 
$$H^q(X, \Omega^p_X\otimes \cA)=0, \text{for } p+q >\dim(X).$$
 If we restrict ourselves to the algebro-geometric setting, there are two directions to generalize this result: one is to weaken the positivity condition on $\cA$ by introducing $\Q$ or ($\R$)-divisors, which is so-called Kawamata-Viehweg Vanishing, e.g. \cite{AMPW}; the other one is by introducing Saito's mixed Hodge modules, \cite{Sch16} for a nice survey. Both generalized vanishing results have some important applications in algebraic geometry. Let me refer to \cite{Wei23} for a detailed discussion on this topic, and making a further generalization using Sabbah-Mochizuki's mixed Twistor $\sD$-modules. 

In one special case, which is when $X$ is a smooth projective toric variety, the above vanishings have a wider range. Actually, 
$$H^k(X, \Omega^p_X\otimes \cA)=0, \text{for } k>0.$$
In this case, it is called Bott Vanishing. Let me refer to \cite{Fuj07} and \cite{Fuj08} for the history of it and an algebraic proof using the multiplication map on $X$ to reduce it to Serre Vanishing. One natural question to ask is that if such vanishing works for a larger class of varieties. \cite{Tot20} and \cite{Tot23} study the cases for surfaces and Fano-3-folds. In this paper, we essentially still focus on the toric variety case, and we try to generalize it in the two directions as we mentioned above, that were used to generalize Kodaira-Akizuki-Nakano Vanishing. Please note that in Fujino's papers, he also has a version of Bott Vanishing for singular toric varieties. However, the method this this paper cannot be directly applied to the singular case. Please see Remark \ref{R: singular}, for more details.

Let $X$ be a smooth projective toric variety, containing the algebraic torus $T\simeq G_m^r$, with $D=X\setminus T$, the torus-invariant simple normal crossing (SNC) divisor. We have the decomposition of irreducible components $D=D_1+...+D_n$. For a vector $\bd=(d_1,d_2,...,d_n)\in \R^n$, we denote $\bd D= d_1D_1+...+d_nD_n$. We say $\bd \leq \be,$ for some $\be \in \R^n$, if $d_j\leq e_j, $ for all $j$, and we denote $\0=(0,...,0)$, similarly for $\1$, etc., with the size being clear from the context. 

Now, we state our first vanishing result.
\begin{thm}\label{T: main 1}
    In the above setting, let $D'$ be a divisor that is part of $D$, and $\cL$ be a line bundle on $X$. If $\cL (-\bd D')$ is ample as a $\R$-divisor, for some $\0\leq \bd\leq \1$, then we have the Bott-type vanishings:
$$H^k(X, \Omega^p_X(\log D')(-D')\otimes \cL)=0,
$$
for all $p\geq 0$ and $k\geq 1$.
%Dually, 
%$$H^l(X, \Omega^p_X(\log D')\otimes \cL^{-1})=0,
%$$
%for all $p$ and $l\neq \dim X$.
\end{thm}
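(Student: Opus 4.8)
The plan is to argue by a double induction: on $r=\dim X$ and, for fixed $r$, on the number $s$ of irreducible components of $D$ that are not contained in $D'$. The tool that drives the induction is the residue/restriction short exact sequence attached to a smooth torus-invariant prime divisor $E=D_i$ with $E\not\le D'$,
$$0\longrightarrow \Omega_X^p\big(\log(D'+E)\big)(-E)\longrightarrow \Omega_X^p(\log D')\longrightarrow \Omega_E^p\big(\log(D'|_E)\big)\longrightarrow 0,$$
which is exact because $D'+E$ is SNC. Tensoring it with $\cO_X(-D')\otimes\cL$ produces
$$0\longrightarrow \Omega_X^p\big(\log(D'+E)\big)\big(-(D'+E)\big)\otimes\cL\longrightarrow \Omega_X^p(\log D')(-D')\otimes\cL\longrightarrow \Omega_E^p\big(\log(D'|_E)\big)(-D'|_E)\otimes\cL|_E\longrightarrow 0,$$
so via the long exact cohomology sequence it is enough to prove the stated vanishing for the sub- and quotient sheaves.

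For the base case $s=0$, i.e. $D'=D$, I would use that on a smooth toric variety $\Omega_X^1(\log D)\cong\cO_X^{\oplus r}$ is trivial, whence $\Omega_X^p(\log D)\cong\cO_X^{\oplus\binom rp}$ and, using $K_X=-D$, $\Omega_X^p(\log D)(-D)\otimes\cL\cong(\omega_X\otimes\cL)^{\oplus\binom rp}$. By openness of the ample cone we may, after a small perturbation, assume $\0<\bd<\1$; then $\cL-\bd D$ is ample, $(X,\bd D)$ is klt, and $\cL$ differs from the ample $\R$-divisor $\cL-\bd D$ by the klt boundary $\bd D$, so $H^k(X,\omega_X\otimes\cL)=0$ for $k\ge 1$ by Kawamata--Viehweg vanishing. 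This is exactly the sense in which the hypothesis is ``compatible with Kawamata--Viehweg''.

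For the inductive step I would pick a component $E$ of $D$ with $E\not\le D'$ and feed it into the displayed sequence. The quotient sheaf is supported on $E$, which is again a smooth projective toric variety, of dimension $r-1$, whose torus-invariant boundary contains $D'|_E$ as an SNC subdivisor, and $\cL|_E\big(-\bd(D'|_E)\big)$ is ample as the restriction of an ample $\R$-divisor; hence $H^k$ of the quotient vanishes for $k\ge 1$ by the induction on $\dim X$. For the sub-sheaf, $D'$ has been enlarged to $D'+E$, which drops $s$ by one; by openness of the ample cone, $\cL-\bd D'-\varepsilon E$ is still ample for sufficiently small $\varepsilon>0$, and the new coefficient vector (equal to $\bd$ on the components of $D'$ and $\varepsilon$ on $E$) has all entries in $[0,1]$, so the hypothesis of the theorem holds for $(X,D'+E,\cL)$ and $H^k$ of the sub-sheaf vanishes for $k\ge 1$ by the induction on $s$. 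The long exact sequence then gives $H^k\big(X,\Omega_X^p(\log D')(-D')\otimes\cL\big)=0$ for all $k\ge 1$.

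The conceptual content — and the only place the toric hypothesis is genuinely used — is the identification $\Omega_X^p(\log D)(-D)\cong\omega_X^{\oplus\binom rp}$, which converts the ``Bott-type'' vanishing for the full boundary into ordinary Kawamata--Viehweg vanishing; the residue sequences then transport this to every subdivisor $D'$. The steps I expect to require the most care are the bookkeeping ones: checking exactness of the residue sequence for $D'\subsetneq D$ (it holds because $D'+E$ is SNC, applying the usual local model with $D'+E$ playing the role of the boundary), and checking that the numerical hypothesis both descends to the enlarged pair $(X,D'+E)$ — this is exactly where openness of the ample cone enters — and restricts to $(E,D'|_E)$. I do not anticipate a serious obstacle beyond these compatibilities and the verification of the residue sequence in all degrees $p$.
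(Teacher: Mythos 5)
Your proposal is correct and follows essentially the same route as the paper: the same restriction sequence $0\to \Omega^p_X(\log(D'+E))(-D'-E)\otimes\cL\to \Omega^p_X(\log D')(-D')\otimes\cL\to \Omega^p_E(\log D'|_E)(-D'|_E)\otimes\cL|_E\to 0$, the same double induction on $\dim X$ and on the number of components of $D$ not in $D'$, and the same base case $D'=D$ where triviality of $\Omega^p_X(\log D)$ reduces everything to Kawamata--Viehweg vanishing for $\omega_X\otimes\cL$. The only cosmetic difference is that the paper packages the induction as a separate proposition for a general $D$-regular coefficient sheaf $\cE$ (requiring vanishing of $\omega\otimes\cE$ on $X$ and on the strata), whereas you carry the ampleness hypothesis through the induction directly, checking it restricts to $E$ and, by openness of the ample cone, extends to the enlarged boundary $D'+E$.
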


It is suggested to compare this result to \cite[Theorem 1.1]{Fuj07}, and to \cite[Theorem 2.1.1]{AMPW}. The setting above is more compatible with the Kawamata-Viehweg type generalization.

Let's consider three extremal cases of the theorem. The first case is that when $D'=\emptyset$, it is the classical Bott vanishing. The second case is that when $D'=D$, by noticing that $\Omega^p_X(\log D)$ is a trivial vector bundle on $X$, we can reduce it to the classical Kawamata-Viehweg vanishing, which implies $H^k(X, \omega_X\otimes \cL)=0,$ for $k>0$. Actually, the idea of the proof the theorem in general is to reduce it to such case. The third case is that when $\cL\simeq \cO_X(D')$, then we are requiring the $D'$ containing the support of an ample torus invariant divisor, which is equivalent to ask $U=X \setminus D'$ being contained in one affine chart of the toric variety $X$. (This is because, otherwise, we can construct a toric inclusion $\P^1\to U$.)  Hence $U$ is isomorphic to the affine space $\A^r$ removing several coordinate hyperplanes. Recall that the mixed Hodge structure on $V^*=H^*(U, \C)$ is induced by the hyper-cohomologies of the log-de Rham complex $\H^*(\Omega^\bullet_X(\log D'))$, with the so-called stupid filtration to give the Hodge filtration on $V^*$, and we get $V^k\simeq \bigoplus_{p+q=k}H^q(X, \Omega^p_X(\log D')) $. However, in our setting, it can be computed straight forwardly that $\dim V^k= \dim H^0(X, \Omega^k_X(\log D')).$ This implies the vanishings we want, and provides an evidence that such vanishings may also related to Hodge theory. 

Actually, we can have a much more general Bott-type Vanishing in the setting of $\C$-mixed Hodge modules, as generalizing Kodaira-Akizuki-Nakano Vanishing to Saito Vanishing. However, to achieve the broader vanishing range comparing to Saito Vanishing, it seems to the author that we are obliged to only consider the mixed Hodge modules that have nice restriction properties along those torus invariant strata. Precisely, we need to require the mixed Hodge module $\cM$ is non-characteristic with respect to all strata of $D$, and we call it being \emph{non-characteristic with respect to $(X, D)$}. In this paper, we always use a \emph{right} $\sD$-module with an increasing good (Hodge) filtration $(\cM, F_\bullet)$ to represent a $\C$-mixed Hodge module, omitting its weight filtration and assuming its polarizability, and for the most of the time, we may further omit the filtration $F_\bullet$, saying that $\cM$ underlies a mixed Hodge module. We use $\Gr^F_\bullet \cM$ to denote the associated graded pieces of $\cM$ with respect to the filtration $F_\bullet$. Since we use right $\sD$-modules, it is natural to consider $T^p_X(-\log D')$, the natural dual bundle of $\Omega^p_X(\log D')$.

\begin{thm}\label{T: main 2}
    In the same setting as in the previous theorem, assume that we have a $\C$-mixed Hodge module $(\cM, F_\bullet)$ on $X$ that is non-characteristic with respect to $(X, D)$. Let $D'$ be a divisor that is part of $D$, and $\cL$ be a line bundle on $X$.  If $\cL (-\bd D')$ is ample as a $\R$-divisor, for some $\0\leq \bd\leq \1$, then we have the Bott-type vanishings:
$$H^k(X, T^p_X(-\log D')\otimes \Gr^F_i\cM \otimes \cL)=0,
$$
for all $i\in \Z$, $p\geq 0$ and $k\geq 1$.
%Dually, 
%$$H^l(X, T^p_X(-\log D')(D')\otimes \Gr^F_i\cM \otimes \cL^{-1})=0,
%$$
%for all $i\in \Z$, $p\geq 0$ and $l\neq \dim X$.
\end{thm}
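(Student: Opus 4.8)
The plan is to bootstrap Theorem \ref{T: main 2} from Theorem \ref{T: main 1} by a standard filtration/dévissage argument, using the non-characteristic hypothesis to keep everything within the log-smooth world. First I would recall that for a coherent right $\sD_X$-module $\cM$ underlying a mixed Hodge module, the associated graded $\Gr^F_\bullet\cM$ is a coherent sheaf of $\Gr^F_\bullet\sD_X = \Sym \, T_X$-modules, i.e. a coherent sheaf on the cotangent bundle $T^*X$. The non-characteristic condition with respect to $(X,D)$ means the characteristic variety $\mathrm{Ch}(\cM) = \supp \Gr^F_\bullet\cM$ meets the conormal varieties of all strata of $D$ only in the zero section; in particular, restricting to a component $D_j$ (or iterated intersection) is exact on the underlying filtered $\sD$-modules and preserves the mixed Hodge module structure (Saito's non-characteristic restriction). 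The upshot I want is a Rapoport-Zucker / Saito style resolution: there is a complex built from $\Omega^\bullet_X(\log D')$ tensored with the $\Gr^F$-pieces of $\cM$ that computes the relevant cohomology, and whose terms are precisely $\Omega^p_X(\log D')(-D') \otimes \Gr^F_i\cM \otimes \cL$ after the appropriate twist — matching the shape of Theorem \ref{T: main 1}.

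The key steps, in order: (1) Reduce to the case where $\cM = \cO_X$ (with its trivial Hodge filtration) twisted along the log structure is replaced by a general non-characteristic $\cM$; concretely, observe that the graded de Rham complex $\Gr^F_i \DR_X(\cM)$ (suitably indexed for right $\sD$-modules) has terms $\Omega^p_X \otimes \Gr^F_{i+\bullet}\cM$, and since $\cM$ is non-characteristic with respect to $(X,D)$ one can form the logarithmic variant with terms $\Omega^p_X(\log D') \otimes \Gr^F_{\bullet}\cM$. (2) Use the self-duality / triviality of $\Omega^\bullet_X(\log D)$ on the toric $X$ together with the non-characteristic property to identify $T^p_X(-\log D')\otimes \Gr^F_i\cM$ as a direct summand, after twisting by $\cO_X(-D')$ and suitable line bundles, of a term appearing in a complex of the type handled by Theorem \ref{T: main 1}. (3) Run a hypercohomology spectral sequence / induction on the length of the Hodge filtration: each graded piece $\Gr^F_i\cM$ is a coherent sheaf, and locally (after the non-characteristic restriction to strata) the relevant twisted sheaf is a successive extension of bundles of the form $\Omega^p_X(\log D')(-D')\otimes \cL'$ with $\cL' = \cL$ still satisfying the ampleness hypothesis $\cL'(-\bd D')$ ample; apply Theorem \ref{T: main 1} to each, then reassemble.

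I expect the main obstacle to be Step (2)–(3): making precise the claim that $T^p_X(-\log D') \otimes \Gr^F_i\cM$, once twisted down by $\cO_X(-D')$, is \emph{filtered} by sheaves of the exact form $\Omega^{p'}_X(\log D')(-D')\otimes\cL$ to which Theorem \ref{T: main 1} applies verbatim — this is where the non-characteristic hypothesis must do real work, since in general $\Gr^F_i\cM$ is only a coherent sheaf on $T^*X$ supported in the zero section after restriction, and one needs that its pushforward to $X$ is locally free (or at least that the relevant Koszul/de Rham differentials degenerate). The cleanest route is probably to prove a local statement: near each torus-fixed point, the toric chart is $\A^r$ minus coordinate hyperplanes, $\Omega^\bullet_X(\log D)$ is the Koszul complex on $d\log$ of the coordinates, and non-characteristicity forces $\Gr^F_i\cM$ to be a vector bundle on which the logarithmic connection acts with nilpotent residues — reducing the whole complex to a tensor product of Theorem \ref{T: main 1}'s complex with a fixed vector bundle, after which additivity of vanishing over direct sums of line-bundle twists (all with the same $\cL$) finishes it. The weight filtration plays no role beyond guaranteeing strictness, so it may be omitted as the paper's conventions allow.
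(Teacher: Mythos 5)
There is a genuine gap, and it sits exactly where you predicted: the claim that $T^p_X(-\log D')\otimes \Gr^F_i\cM$, suitably twisted, is filtered by sheaves of the form $\Omega^{p'}_X(\log D')(-D')\otimes\cL$ to which Theorem \ref{T: main 1} applies does not hold, and the proposed repair is not available. Non-characteristicity with respect to $(X,D)$ only constrains the characteristic variety of $\cM$ along the conormals of the strata of $D$; it does not force $\Gr^F_i\cM$ to be locally free (take $\cM$ to be the pushforward of the structure sheaf of a smooth subvariety transverse to all strata: it is non-characteristic for $(X,D)$ but $\Gr^F_i\cM$ is torsion), and it certainly does not produce nilpotent residues or a splitting into line bundles. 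Moreover, even in the best case where $\Gr^F_i\cM$ is a vector bundle $E$, vanishing of $H^k(X,\Omega^p_X(\log D')(-D')\otimes\cL)$ says nothing about $H^k$ after tensoring with $E$; ``additivity over direct sums of line-bundle twists'' would require $E$ to be trivial, which is baseless. So Theorem \ref{T: main 2} cannot be bootstrapped from Theorem \ref{T: main 1} in the way you outline: the Hodge-theoretic positivity of the graded pieces has to enter through an MHM-level vanishing theorem, not through the line-bundle statement.

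The paper's proof has a different architecture, and both of its main ingredients are absent from your proposal. First, the base case is $D'=D$ (not $\cM=\cO_X$): there $T^p_X(-\log D)$ is trivial, and the required vanishing $H^k(X,\Gr^F_i\cM\otimes\cL)=0$ is obtained from Kawamata--Viehweg vanishing for mixed Hodge modules (Suh, Wu) for the lowest piece, and for the higher pieces from the twisted vanishing of \cite[Theorem 1.2]{Wei23} applied to $\H^k(\DR_{(X,D)}\Gr^F_\bullet\cM(*D)\otimes\cL)$; the hypothesis ``$\cL(-\bd D)$ ample'' is fed into that theorem via the explicit computation of the multi-indexed Kashiwara--Malgrange filtration $V^D_{\bd}\cM(*D)=\cM(\lfloor\bd+\1\rfloor D)$ (Proposition \ref{P: multi-V for M(*D)}), which is where non-characteristicity does its real work; one then descends from hypercohomology of the graded log-de Rham complex to the individual graded pieces by induction on the filtration, using triviality of the log (co)tangent bundle. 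Second, the reduction from general $D'$ to $D'=D$ is an induction on the components of $D-D'$ and on $\dim X$: for $H\subset D-D'$ one uses the dual of (\ref{E: the exact sequence}) to get $0\to T^p_X(-\log(D'+H))\to T^p_X(-\log D')\to T^p_H(-\log D_H)\otimes\cO_X(H)|_H\to 0$, and then Proposition \ref{P: restriction of right D-mod}, i.e.\ Saito's non-characteristic restriction $\cM_H=\cM(H)|_H$, to see that the third term tensored with $\Gr^F_\bullet\cM$ is $T^p_H(-\log D_H)\Gr^F_\bullet\cM_H$ with $\cM_H$ again a mixed Hodge module non-characteristic with respect to $(H,D_H)$, so the induction closes. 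You mention non-characteristic restriction in passing, but your induction runs over the Hodge filtration instead of over the components of $D-D'$, and without the $V$-filtration computation and the input from \cite{Wei23} (or some equivalent Saito-type twisted vanishing) the argument cannot get off the ground.
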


Moreover, according to the proof of both theorems that will appear in the next section, the reason that we need $X$, assuming smoothness and projectivity, to be a toric variety is only due to the fact that the sheaf of log-one-forms $\Omega^1_X(\log D)$ is a trivial bundle on $X$. This means that Bott-type vanishing holds for any projective smooth log-pair with SNC boundary $(X, D)$, with $\Omega^1_X(\log D)$ being a trivial bundle. By considering the quasi-albanese map of $X\setminus D$, \cite{Fu14}, such condition actually implies that $X\setminus D$ is a quasi-abelian variety (in the sense of Iitaka), with a log-smooth toric-bundle compactification. In this paper, we do not use the group structure on $X\setminus D$.

\begin{thm}\label{T: main}
    Theorem \ref{T: main 1} and more generally, Theorem \ref{T: main 2} still holds if $(X, D)$ is a smooth toric-bundle compactification of a quasi-abelian variety $X\setminus D$, with $D$ being the boundary divisor.
\end{thm}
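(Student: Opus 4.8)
The plan is to reduce the statement to a single geometric input about the pair $(X,D)$. As indicated in the remark preceding the statement, the toric hypothesis on $X$ is used in the proofs of Theorems~\ref{T: main 1} and~\ref{T: main 2} only in order to know that $\Omega^1_X(\log D)$ is a trivial bundle on $X$ (whence so are all $\Omega^p_X(\log D)$ and their duals $T^p_X(-\log D)$). So it suffices to prove that for a smooth toric-bundle compactification $(X,D)$ of a quasi-abelian variety $G=X\setminus D$ one still has $\Omega^1_X(\log D)\cong\cO_X^{\oplus r}$, $r=\dim X$; granting this, one repeats the arguments of the previous section verbatim.

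To prove the triviality I would work with the structure morphism $\pi\colon X\to A$, where $A$ is the abelian part of $G$ and the fibre is a fixed smooth complete toric variety $Z$ compactifying the torus part $T_0\cong\G_m^s$ of $G$ (so $r=\dim A+s$), with $D$ the relative toric boundary, which is SNC over $A$. Split the relative logarithmic cotangent sequence
\[
0\to \pi^*\Omega^1_A\to \Omega^1_X(\log D)\to \Omega^1_{X/A}(\log D)\to 0 .
\]
Here $\pi^*\Omega^1_A$ is trivial because $A$ is an abelian variety, and $\Omega^1_{X/A}(\log D)$ is trivial of rank $s$: it restricts on each fibre to the trivial logarithmic cotangent bundle of $Z$, and the relative forms $d\chi/\chi$, for $\chi$ running over a basis of the character lattice $\widehat{T_0}$, are global because the transition functions of a toric bundle are translations by $T_0$, under which $d\chi/\chi$ is invariant. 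The extension then splits: the translation-invariant $1$-forms $\fg^\vee=H^0(G,\Omega^1_G)$, of dimension $r$, extend to global logarithmic $1$-forms on $X$ — the standard logarithmic analogue of invariance of forms on semi-abelian varieties, cf.~\cite{Fu14} — and, being linearly independent over the dense open $G$, they give $h^0\bigl(X,\Omega^1_X(\log D)\bigr)\ge r$, which forces the sequence to split. Hence $\Omega^1_X(\log D)\cong\pi^*\Omega^1_A\oplus\Omega^1_{X/A}(\log D)\cong\cO_X^{\oplus r}$. Equivalently, one may observe that the $\cO_X$-linear map $\fg^\vee\otimes_\C\cO_X\to\Omega^1_X(\log D)$ of rank-$r$ bundles has determinant a section of $\det\Omega^1_X(\log D)=\cO_X(K_X+D)$; since $(X,D)$ is log Calabi-Yau, i.e.\ $K_X+D\sim 0$ ($K_A=0$, and the relative log-canonical $K_{X/A}+D$ restricts to $K_Z+\Delta_Z=0$ on the toric fibre), this section is a non-zero constant, so the map is an isomorphism.

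The main obstacle is exactly this triviality of $\Omega^1_X(\log D)$. Note that $H^1(X,\cO_X)=H^1(A,\cO_A)$ is nonzero in this generality, so the triviality of the sub- and quotient-bundles of the displayed sequence does not by itself give a splitting; what makes the argument run is the count of global logarithmic $1$-forms, which in turn relies on the (standard but not purely formal) facts that the invariant forms on $G$ extend to and frame $\Omega^1_X(\log D)$ along $D$, equivalently that $K_X+D\sim 0$. I would also reinspect the proofs of Theorems~\ref{T: main 1} and~\ref{T: main 2} to confirm that, beyond the triviality of $\Omega^\bullet_X(\log D)$ and Kawamata-Viehweg vanishing, no further toric-specific property is used; in particular that the reduction to the case $D'=D$ does not quietly appeal to $H^{>0}(X,\cO_X)=0$, which fails here.
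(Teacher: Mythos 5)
Your proposal is correct and takes essentially the same route as the paper: the proofs in \S2 are carried out precisely in the generality of a log-smooth pair with $\Omega^1_X(\log D)$ trivial (triviality on the strata then following from the sequence (\ref{E: the exact sequence})), so the theorem reduces to knowing that a smooth toric-bundle compactification of a quasi-abelian variety has trivial log-cotangent bundle. Your explicit verification of that triviality (via the section count for the splitting of the relative log-cotangent sequence, or the $K_X+D\sim 0$ determinant argument) spells out a point the paper delegates to the quasi-Albanese discussion in \cite{Fu14}, and your closing caution is consistent with the paper: only triviality of $\Omega^\bullet_X(\log D)$ and Kawamata--Viehweg-type vanishing (also in its mixed-Hodge-module form) are used, not $H^{>0}(X,\cO_X)=0$.
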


In \S3, we will give a counter-example of the relative Bott vanishing that is conjectured in \cite{Tot23}. To get the non-vanishing, the general idea shares some similarity with the method of the proof of the Bott-type vanishings above, but bears more quantitative computations. 

\section{The proof of the Bott-type vanishings}
Let $(X, D)$ be a projective log-pair with $X$ smooth and $D$ SNC. We use $D^i_j$, $i\in{1,...,r}$ and $j\in \Z^+$, to denote the irreducible components of the $i$-th order strata of $D$, i.e. each $D^i_j$ has codimension $i$ in $X$ and can realized an intersection of $i$ different irreducible components of $D$. We have the decomposition of irreducible components $D=D_1+...+D_n$, with $D_j=D^1_j.$  Each $D^i_j$ with the naturally induced boundary divisor, defined as $\sum_{\{k|D^i_j \nsubseteq D_k\}}D_k|_{D^i_j}$, is also a projective log-smooth pair.

\begin{definition}\label{D: D-reg}
    Let $\cE$ be a quasi-coherent sheaf on $X$, and we say that it is \emph{$D$-regular}, if locally, a sequence of functions that defines each of the irreducible components of $D$ forms a regular sequence on $\cE$. 
\end{definition}

One example for a \emph{$D$-regular} sheaf is any locally free sheaf. Another less trivial example is the following. Assume we have a $\C$-mixed Hodge module $\cM$ on $X$ and it is non-characteristic with respect to $(X, D)$, i.e. all inclusions $D^i_j\to X$ are non-characteristic with respect to $\cM$. Then $\cM$ is $D$-regular. Hence, any of its subsheaf, or further tensor with a locally free sheaf, is also $D$-regular. Be careful that, if we use left $\sD$-modules, $\cM^i_j$ is still a $\sD$-module in the previous assumption. However, in this paper, we only use right $\sD$-modules to make it easy to be compared with different generalized Saito Vanishing, so to make it is still a $\sD$-module, we own a small modification, which will be addressed in Proposition \ref{P: restriction of right D-mod}

As mentioned in the Introduction, we will consider the case that $(X, D)$ satisfying that $\Omega^1_X(\log D)$ is a  trivial bundle. Note that $(D^i_j, \sum_{\{k|D^i_j \nsubseteq D_k\}}D_k|_{D^i_j})$ is  also a projective log-smooth pair with the sheaf of log-one-forms being a trivial bundle. To see this, (without using the fact that $(X, D)$ is a quasi-abelian variety,) let's only consider the $D_1$ case, and denote the induced boundary divisor $(D- D_1)|_{D_1}$. Then, the claim is due to the following short exact sequence:
\begin{equation}\label{E: the exact sequence}
    0\to \cO_{D_1}\to  \Omega^1_X(\log D)|_{D_1} \to \Omega^1_{D_1}(\log (D- D_1)|_{D_1})\to 0,
\end{equation}
where the first map is locally defined by sending $1$ to $dt/t$, $t$ a local defining function of $D_1\subset X$.

\begin{prop}\label{P: Induction}
In the above setting. Let $D'$ be a divisor that is part of $D$. If we have a $D$-regular quasi-coherent sheaf $\cE$, such that 
$$H^k(X, \omega_X\otimes \cE)=0 \text{ and } H^k(D^i_j, \omega_{D^i_j}\otimes \cE|_{D^i_j})=0,$$
for all such $i,j$ that $D^i_j$ is not contained in $D'$, and $k\geq 1$, then we have 
$$H^k(X, \Omega^p_X(\log D')(-D')\otimes \cE)=0,
$$
for all $p$ and $k\geq 1$.

%Dually, in the same setting, if $$H^{l_0}(\cE)=0 \text{ and } H^{l_i}(D^i_j, \cE|_{D^i_j})=0,$$
%for all such $i,j$ that $D^i_j$ is not contained in $D'$, and $l_i\neq \dim X-i$, then we have 
%$$H^l(X, \Omega^p_X(\log D')\otimes \cE)=0,
%$$
%for all $p$ and $l \neq \dim X$.
\end{prop}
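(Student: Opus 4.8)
The plan is to prove the statement by a double induction: an outer induction on $\dim X$, and, for $(X,D)$ fixed, an inner induction on the number $c$ of irreducible components of $D$ that are not contained in $D'$, the base case being $c=0$, i.e.\ $D'=D$. In that base case, since $\Omega^1_X(\log D)$ is trivial of rank $r=\dim X$, so is $\Omega^p_X(\log D)=\bigwedge^p\Omega^1_X(\log D)$, of rank $\binom{r}{p}$; moreover $\det\Omega^1_X(\log D)=\omega_X(D)$ is trivial, so $\omega_X\cong\cO_X(-D)$. Hence $\Omega^p_X(\log D)(-D)\otimes\cE\cong(\omega_X\otimes\cE)^{\oplus\binom{r}{p}}$, and the asserted vanishing is precisely the first hypothesis imposed on $\cE$. (When $\dim X=0$, or more generally $D=\emptyset$, there is nothing to prove; this starts the outer induction.)

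For the inner step, with $c\ge 1$, I would choose an irreducible component $D_1$ of $D$ with $D_1\nsubseteq D'$ and set $D''=D'+D_1$, a part of $D$ with one fewer ``missing'' component. The key input is the short exact sequence
\begin{equation*}
0\to\Omega^p_X(\log D'')(-D_1)\to\Omega^p_X(\log D')\xrightarrow{\;\textup{res}\;}(\iota_1)_*\,\Omega^p_{D_1}\!\big(\log D'|_{D_1}\big)\to 0,
\end{equation*}
where $\iota_1\colon D_1\hookrightarrow X$ is the inclusion and the last map is restriction of log forms; because $D_1\nsubseteq D'$, this is verified at once in local coordinates adapted to $D$. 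Tensoring by the invertible sheaf $\cO_X(-D')$ and then by $\cE$ — the latter preserving exactness precisely because $\cE$ is $D$-regular, so that a local equation of $D_1$ acts injectively on $\cE$ and $\textup{Tor}_1^{\cO_X}\!\big(\cE,(\iota_1)_*\cO_{D_1}\big)=0$ — and using $D''=D'+D_1$ together with $\cO_X(-D')|_{D_1}=\cO_{D_1}(-D'|_{D_1})$, yields
\begin{equation*}
0\to\Omega^p_X(\log D'')(-D'')\otimes\cE\to\Omega^p_X(\log D')(-D')\otimes\cE\to(\iota_1)_*\!\Big(\Omega^p_{D_1}\!\big(\log D'|_{D_1}\big)(-D'|_{D_1})\otimes\cE|_{D_1}\Big)\to 0.
\end{equation*}
In the associated long exact cohomology sequence the first term has vanishing $H^k$ for $k\ge1$ by the inner induction hypothesis applied to $D''$, and the third term has vanishing $H^k$ for $k\ge1$ by the outer induction hypothesis applied to the pair $\big(D_1,(D-D_1)|_{D_1}\big)$ with the part $D'|_{D_1}$ of its boundary and the sheaf $\cE|_{D_1}$; hence $H^k\big(X,\Omega^p_X(\log D')(-D')\otimes\cE\big)=0$ for all $k\ge1$, which closes the induction.

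Two points would need to be checked, though both are essentially bookkeeping. First, one must confirm that $\big(D_1,(D-D_1)|_{D_1},D'|_{D_1},\cE|_{D_1}\big)$ satisfies the hypotheses of the proposition in dimension $\dim X-1$: the pair is projective and log-smooth, $\Omega^1_{D_1}\big(\log(D-D_1)|_{D_1}\big)$ is trivial by \eqref{E: the exact sequence}, the sheaf $\cE|_{D_1}$ is regular with respect to $(D-D_1)|_{D_1}$ because quotienting a regular sequence on $\cE$ by the equation of $D_1$ leaves a regular sequence, and the strata of $\big(D_1,(D-D_1)|_{D_1}\big)$ are exactly the strata $D^i_j$ of $(X,D)$ contained in $D_1$, for which ``not contained in $D'|_{D_1}$'' is equivalent to ``not contained in $D'$'', so the relevant cohomology-vanishing hypotheses transfer unchanged. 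Second — and this is really the only step carrying content — the twisted sequence stays exact after $\otimes\,\cE$, which is exactly what $D$-regularity of $\cE$ buys; I expect this, together with the modest care needed to see that twisting by $-D'$ upgrades $-D_1$ into $-D''$ on the sub-object while restricting correctly along $D_1$, to be the main thing to get right. The geometric input is nothing more than the single residue-type sequence above, used one boundary component at a time.
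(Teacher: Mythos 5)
Your proof is correct and is essentially the paper's own argument: the same double induction on $\dim X$ and on the number of components of $D-D'$, with base case $D'=D$ handled by triviality of $\Omega^p_X(\log D)$, and the inner step driven by exactly the short exact sequence \eqref{E: 1} (your restriction sequence twisted by $-D'$ and tensored with $\cE$). You simply spell out the details the paper leaves implicit, namely that $D$-regularity gives the needed Tor-vanishing and that the hypotheses transfer to $\bigl(D_1,(D-D_1)|_{D_1},D'|_{D_1},\cE|_{D_1}\bigr)$.
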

\begin{proof}
Note that, in the case that $D'=D$, the proposition is trivial by noticing that $\Omega^p_X(\log D)$ are trivial vector bundles. We can conclude the proof of the proposition by arguing inductively on the number of components of $D-D'$ and the dimension of $X$, by adding one component $H\subset D$ (, which is not part of $D'$, ) onto $D'$ and considering the following short exact sequence:
\begin{align}
    0\to  \Omega^p_X(\log (D'+H))(-D'-H)\otimes \cE &\to \Omega^p_X(\log D')(-D')\otimes \cE \label{E: 1}  
    \\&\to \Omega^p_{H}(\log D'|_{H})(-D'|_{H})\otimes \cE|_{H}\to 0.\nonumber
\end{align}
%For the dual part, we can perform the induction and conclude the proof by using the following short exact sequence:
%\begin{equation}\label{E: 2}
%0\to  \Omega^p_X(\log D')\otimes \cE \to \Omega^p_X(\log (D'+H))\otimes \cE  \to \Omega^{p-1}_{H}(\log D'|_{H})\otimes \cE|_{H}\to 0.    
%\end{equation}
\end{proof}

\begin{proof}[Proof of Theorem \ref{T: main 1} (in the setting of Theorem \ref{T: main})]
    Due to the previous proposition, we are reduced to check the following case: 
    \begin{equation*}
        H^k(X, \omega_X\otimes \cL)=0, \text{for } k>0
        %H^l(X, \cO_X\otimes \cL^{-1})&=0, \text{for } l\neq \dim X.
    \end{equation*}
    This just Kawamata-Viehweg Vanishing, with SNC boundaries, e.g. \cite[\S 6.2.b)]{EV92}.
\end{proof}
%    Although the proof for the dual part of Theorem \ref{T: main 1} can be easily done by using Serre vanishing, and the current argument seems superficial, this strategy can be helpful in proving the dual part of Theorem \ref{T: main 2}, and shows its topological background by the following:

\begin{remark}
    If $D=A+B$ is a reduced SNC divisor on $X$ separated into two parts, we use $\C_X[*A+!B][n]$ to denote the perverse sheaf that is constructed by the constant perverse sheaf on $X\setminus (A+B)$, and using $Ri_*$ functor extending along $A$, and $Ri_!$ along $B$(, which does not depend on the order of the extension by Deligne's logarithmic comparison).  Using $\cE=\cO_X$, the exact sequences (\ref{E: 1}) in the previous proposition can be related to the exact sequences in the abelian category of perverse sheaves on $X$:
    \begin{align*}
            0\to \C_{H}[*(D'-D'')|_{H}+!D''|_{H}][n-1]\to& \C_X[*(D'-D'')+!(D''+H)][n] \\
            \to&\C_X[*(D'-D'')+!D''][n]\to 0.
    \end{align*}
    %\begin{align*}
    %        0\to \C_X[*(D'-D'')+!D''][n]\to& \C_X[*(D'-D''+H)+!D''][n]  \\
    %        \to&\C_{H}[*(D'-D'')|_{H}+!D''|_{H}][n-1]\to 0,
    %\end{align*}
by lifting it as a sequence of mixed Hodge modules, and considering their corresponding log-de Rham complexes, \cite{W17a}. More generally, if we take $\cE$ to be a Hodge bundle corresponding to a graded polarizable variation of mixed Hodge structures, then we just replace the constant local system $\C$ above by the local system that corresponds to it. To prove Theorem \ref{T: main 2}, we need to work in the setting of Saito's mixed Hodge modules, following a similar strategy.
\end{remark}

\begin{remark}\label{R: singular}
    The main obstruction for using the current argument to prove the Bott Vanishing on singular toric varieties is that the short exact sequence in the previous remark does not hold for normal toric varieties in general. However, it still holds in the case of simplicial toric varieties, hence Theorem \ref{T: main 1} still holds using essentially the same proof, by replacing those sheaves of log-forms by their corresponding reflexive sheaves. For the general normal toric varieties case, we need to have a better understanding of the simplicial resolution of the toric variety, \cite{Ish87}. 
\end{remark}

Before we move on to the proof of Theorem  \ref{T: main}, we recall some basic properties of the mixed Hodge module $\cM$ in the setting of Theorem  \ref{T: main}. 
\begin{prop}\label{P: restriction of right D-mod}
    Let $\cM$ be a mixed Hodge module on a smooth variety $X$, and $D$ a SNC divisor on $X$. Let $H$ be any irreducible component of $D$, and $i_H: H\to X$ be the natural inclusion.   Assume that $\cM$ is non-characteristic with respect to $(X, D)$. Then,  we have  
    $$i_H^*\cM[1]\simeq \cM_H:= \cM(H)|_H(=\cM (H)/\cM), $$
    which still underlies a mixed Hodge module on $H$.  Furthermore, setting $D_H=(D-H)|_H$, $\cM_H$ is non-characteristic with respect to $(H, D_H)$.
\end{prop}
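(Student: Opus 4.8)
The plan is to deduce all three assertions from Kashiwara's theory of non-characteristic inverse images, from the $D$-regularity recorded after Definition \ref{D: D-reg}, and from an elementary description of the strata of $(H,D_H)$. First I would note that $H=D^1_j$ for some $j$, so the hypothesis that $\cM$ is non-characteristic with respect to $(X,D)$ gives in particular that $\mathrm{Char}(\cM)\cap T^*_H X$ lies in the zero section. By Kashiwara's non-characteristic inverse image theorem the derived inverse image of $\cM$ along $i_H$ is then concentrated in a single cohomological degree and is again holonomic; with the normalisation used in the paper this single sheaf is $i_H^*\cM[1]$. To identify it I would use that a non-characteristic mixed Hodge module is $D$-regular, so that a local defining equation $t$ of $H$ is a non-zero-divisor on $\cM$; hence $\mathrm{Tor}^{\cO_X}_{>0}(\cM,\cO_H)=0$, the derived inverse image of the right $\sD_X$-module $\cM$ reduces to $\omega_{H/X}\otimes_{\cO_X}\cM=(\cM\otimes_{\cO_X}\cO_X(H))|_H$, and this is exactly the $\cO_H$-module $\cM(H)/\cM$ (with $\cM(H):=\cM\otimes_{\cO_X}\cO_X(H)$). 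Transporting the right $\sD_H$-structure carried by the non-characteristic inverse image along this identification yields $i_H^*\cM[1]\simeq\cM_H$.

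For the claim that $\cM_H$ underlies a mixed Hodge module I would apply Saito's functor $i_H^*\colon D^b\mathrm{MHM}(X)\to D^b\mathrm{MHM}(H)$ to $\cM$: the underlying perverse sheaf (equivalently, regular holonomic $\sD$-module) of $i_H^*\cM$ is, up to a shift, the topological inverse image $i_H^{-1}\mathrm{DR}_X(\cM)$, which by the non-characteristic condition and the Kashiwara--Schapira microlocal criterion is concentrated in a single perverse degree. Since the forgetful functor from mixed Hodge modules to perverse sheaves is faithful and exact, $i_H^*\cM[1]$ is therefore a single mixed Hodge module on $H$, whose underlying $\sD_H$-module is the one produced above and whose Hodge filtration is the induced one; see \cite{Sch16} for the formalism.

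For the last assertion I would first observe that, writing $H=D_{k_0}$ and $D_H=\sum_{k\neq k_0}D_k|_H$, every stratum of $D_H$ equals $\bigcap_{k\in I}(D_k|_H)=\bigcap_{k\in I\cup\{k_0\}}D_k$ with $k_0\notin I$, hence is (a connected component of) a stratum $D^{i'}_{j'}$ of $D$ in $X$ contained in $H$. It then suffices to prove a transitivity statement: if $i_H\colon H\to X$ and $i_S\colon S\to X$ are smooth closed embeddings factoring through an embedding $S\hookrightarrow H$, and both $i_H$ and $i_S$ are non-characteristic with respect to $\cM$, then $S\hookrightarrow H$ is non-characteristic with respect to $i_H^*\cM$. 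This applies in our situation because $i_H$ is non-characteristic ($H$ a stratum of $D$) and each relevant $i_S$ is non-characteristic ($S=D^{i'}_{j'}$ a stratum of $D$), both by the standing hypothesis. For the transitivity statement I would use Kashiwara's formula $\mathrm{Char}(i_H^*\cM)=\rho_H(\varpi_H^{-1}(\mathrm{Char}(\cM)))$, where $\varpi_H\colon H\times_X T^*X\to T^*X$ and $\rho_H\colon H\times_X T^*X\to T^*H$ are the two natural maps: a point of $\mathrm{Char}(i_H^*\cM)\cap T^*_S H$ lifts to $(s,\xi)$ with $\xi\in\mathrm{Char}(\cM)\cap T^*_sX$ restricting to it along $T_sH$; as that covector annihilates $T_sS$, so does $\xi$, whence $\xi\in T^*_{S,s}X$, and non-characteristicity of $i_S$ forces $\xi=0$ and thus the original covector to vanish. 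Since this holds for every stratum of $D_H$, $\cM_H$ is non-characteristic with respect to $(H,D_H)$.

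I would treat the analytic ingredients --- Kashiwara's non-characteristic inverse image theorem (single-degree concentration, holonomicity, the characteristic-variety formula) and the fact that the mixed-Hodge-module inverse image along a non-characteristic hypersurface is a single mixed Hodge module --- as known and cite them. The work specific to this proposition is the twist-matching that realises the inverse image as the cokernel $\cM(H)/\cM$ and the stratum bookkeeping feeding the transitivity argument. The point I expect to need the most care is not a hard estimate but the rigour of the first step: the clean cokernel description rests on the Tor-vanishing, i.e.\ on $D$-regularity, so the argument genuinely uses the implication from the non-characteristic hypothesis to $D$-regularity explained after Definition \ref{D: D-reg}, together with the right-versus-left $\sD$-module twist; keeping all of that, and the compatibility of the Hodge filtrations, airtight is where the main obstacle lies.
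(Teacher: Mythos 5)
Your argument is correct, but it takes a different route from the paper: the paper disposes of the proposition in two lines by citing Saito directly --- the identification $i_H^*\cM[1]\simeq \cM(H)/\cM$ together with the fact that it underlies a mixed Hodge module is \cite[3.5.6.~Lemme]{Sa88} (or \cite[2.25.~Lemma]{Sa90}), and the non-characteristicity of $\cM_H$ with respect to $(H,D_H)$ is \cite[3.5.4.~Lemme]{Sa88} --- whereas you essentially re-derive the content of those lemmas. Your derivation is sound: the $D$-regularity noted after Definition~\ref{D: D-reg} gives the Tor-vanishing that collapses the derived inverse image, the right-module twist by $\omega_{H/X}\simeq\cO_X(H)|_H$ produces exactly $\cM(H)/\cM$, single-degree concentration of the perverse pullback plus exactness/faithfulness of the forgetful functor shows one gets a single object of $\mathrm{MHM}(H)$, and your conormal bookkeeping with $\mathrm{Char}(i_H^*\cM)\subseteq\rho_H(\varpi_H^{-1}(\mathrm{Char}\,\cM))$ correctly yields the transitivity needed for $(H,D_H)$. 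What the paper's citation buys in addition is precisely the point you flag as delicate: Saito's lemma identifies the \emph{filtered} restriction, i.e.\ the Hodge filtration on $\cM_H$ is $F_\bullet\cM(H)/F_\bullet\cM$, which is what is actually used later when $\Gr^F_\bullet\cM_H$ enters the proof of Theorem~\ref{T: main 2}; your sketch establishes the statement as literally worded but leaves that filtered compatibility to the reference. What your approach buys is self-containedness and transparency --- it makes explicit that the whole proposition rests only on the characteristic-variety condition, Kashiwara's non-characteristic pullback theorem, and elementary stratum bookkeeping, rather than on a black-box citation.
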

\begin{proof}
    The first statement is due to \cite[3.5.6. Lemme]{Sa88}, or \cite[2.25. Lemma]{Sa90}. The second statement is due to \cite[3.5.4. Lemme]{Sa88}.
\end{proof}
Using the proposition, we get a short exact sequence in the abelian category of mixed Hodge modules on $X$ by
$$0\to i_+\cM_H
\to \cM[!H]
\to  \cM \to 0.
$$
%In particular, it is strict with respect to the Hodge filtration $F_\bullet$, hence induces a graded short exact sequence:
%$$0\to \Gr^F_\bullet i_+\cM_H \to \Gr^F_\bullet \cM[!H]\to  \Gr^F_\bullet\cM \to 0.
%$$
As in \cite[3.5.6 Lemme]{Sa88}, we can explicitly describe the $\R$-indexed Kashiwara-Malgrange (KM) filtration $V^H_\bullet \cM(*H)$: 
\begin{equation}\label{E: V resp to H}
V^H_a \cM= \cM(\lfloor a +1\rfloor H).    
\end{equation}
Moreover, we have an explicit formula for the multi-indexed KM filtration with respect to $D$. Recall that the multi-indexed KM filtration $V^D_\bullet$ is defined by taking the intersection of the corresponding KM filtration with respect to each irreducible component. It only behaves well under the condition of $V$-compatibility, see \cite{W17a} and \cite[\S3]{Wei23} for more details.

\begin{prop}\label{P: multi-V for M(*D)}
    In the same setting as in the previous proposition, we have 
    \begin{equation}\label{E: V resp to D}
    V^D_{\bd}\cM(*D)=  \cM(\lfloor \bd +\1\rfloor D).    
    \end{equation}
    In particular, $\cM(*D)$ is $V$-compatible with respect to $D$.
\end{prop}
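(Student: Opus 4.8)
The plan is to verify the identity \eqref{E: V resp to D} on stalks and then deduce $V$-compatibility as a formal consequence. Fix a point of $X$ and choose local analytic coordinates in which $D=\{t_1\cdots t_\ell=0\}$ with $D_i=\{t_i=0\}$ for $i=1,\dots,\ell$. Everything in sight is local, so we regard $\cM$ as its stalk, an $\cO_X$-module on which $t_1,\dots,t_\ell$ is a regular sequence (this is the $D$-regularity recalled after Definition \ref{D: D-reg}, which holds because $\cM$ is non-characteristic with respect to $(X,D)$). Under this identification $\cM(*D)=\cM[1/(t_1\cdots t_\ell)]$ and $\cM(\sum_i a_iD_i)=\cM\otimes_{\cO_X}\cO_X(\sum_i a_iD_i)=\bigl(\prod_i t_i^{-a_i}\bigr)\cM$ for integers $a_i$, and, by definition, the multi-indexed KM filtration is the intersection
$$V^D_{\bd}\cM(*D)=\bigcap_{i=1}^{\ell}V^{D_i}_{d_i}\cM(*D),$$
so it suffices to (a) compute each single-component filtration $V^{D_i}_{d_i}\cM(*D)$ and (b) intersect the results over $i$.

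For (a) I would apply the single-divisor formula \eqref{E: V resp to H}, i.e.\ \cite[3.5.6. Lemme]{Sa88}, not to $\cM$ itself but to the divisor $D_i$ together with the localized module $\cM(*(D-D_i))$. This is legitimate: since $D$ is SNC, every stratum of $D-D_i$ meets $D_i$ transversally, so the characteristic-variety estimate for localizations together with the non-characteristicity of $\cM$ shows that $\cM(*(D-D_i))$ is still non-characteristic with respect to $(X,D_i)$; and its localization along $D_i$ is precisely $\cM(*D)$. Hence \eqref{E: V resp to H} gives, for every $a\in\R$,
$$V^{D_i}_{a}\cM(*D)=\cM(*(D-D_i))\otimes\cO_X\!\left(\lfloor a+1\rfloor D_i\right)=t_i^{-\lfloor a+1\rfloor}\,\cM\!\left[\tfrac{1}{\prod_{j\neq i}t_j}\right].$$

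For (b) the non-characteristicity of $\cM$ enters through $D$-regularity: since $t_1,\dots,t_\ell$ is a regular sequence on $\cM$, a standard property of regular sequences (equivalently, that the \v{C}ech complex on $t_1,\dots,t_\ell$ reconstructs $\cM$ in the lowest degree, after which the twists $t_i^{-a_i}$ come along by the evident automorphism of $\cM[1/(t_1\cdots t_\ell)]$) yields, for any integers $a_1,\dots,a_\ell$,
$$\bigcap_{i=1}^{\ell}t_i^{-a_i}\,\cM\!\left[\tfrac{1}{\prod_{j\neq i}t_j}\right]=\Bigl(\prod_{i=1}^{\ell}t_i^{-a_i}\Bigr)\cM=\cM\Bigl(\textstyle\sum_i a_iD_i\Bigr)$$
inside $\cM[1/(t_1\cdots t_\ell)]$. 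Substituting $a_i=\lfloor d_i+1\rfloor=\lfloor d_i\rfloor+1$ into (a) and intersecting gives $V^D_{\bd}\cM(*D)=\cM(\lfloor\bd+\1\rfloor D)$, which is \eqref{E: V resp to D}. Finally, $V$-compatibility of $\cM(*D)$ with respect to $D$, in the sense of \cite{W17a} and \cite[\S3]{Wei23}, is immediate from this formula: the filtration in each direction $D_i$ jumps only at integers, the corresponding associated graded is a twist of $\cM/t_i\cM$, and the reductions modulo the various $t_i$ commute precisely because $t_1,\dots,t_\ell$ is a regular sequence — which is exactly the coherence and commutativity demanded by the compatibility conditions.

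The step I expect to be the main obstacle is (a): one must be careful with the conventions for right $\sD$-modules and the $\R$-indexing of the KM filtration, and, more substantively, must justify cleanly that localizing $\cM$ along the components of $D-D_i$ (transverse to $D_i$) preserves non-characteristicity with respect to $(X,D_i)$ and does not disturb the shape of the KM filtration along $D_i$, so that \eqref{E: V resp to H} applies verbatim to $\cM(*(D-D_i))$. Granting that, step (b) is elementary commutative algebra and the bookkeeping with $\lfloor\,\cdot\,\rfloor$ is routine.
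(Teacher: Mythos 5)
Your proposal is correct and follows essentially the same route as the paper: both rest on Saito's single-divisor formula \eqref{E: V resp to H} (applied along each $D_i$ to the partial localization $\cM(*(D-D_i))$, a step the paper likewise leaves implicit, and which your non-characteristicity-of-all-strata argument justifies) together with the regular-sequence/$D$-regularity of $\cM$ coming from non-characteristicity. The only difference is bookkeeping: the paper uses invertibility of the $t_i$ on $\cM(*D)$ to reduce to the range $-\2\leq\bd\leq-\1$ and to the identity $\bigcap_i\cM(-D_i)=\cM(-D)$, whereas you intersect the explicit single-component filtrations directly, which amounts to the equivalent identity $\bigcap_i\cM\bigl[1/\prod_{j\neq i}t_j\bigr]=\cM$.
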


\begin{proof}
    Using (\ref{E: V resp to H}), we just need to consider the $\cO_X(*D)$-module structure on $\cM(*D)$. 
    Working locally around a point $x\in X$, and we can assume that all components of $D=D_1+...+D_k$ contains $x$. Denote $t_i$ is a local function that defines $D_i$, then the set $\{t_i\}$ forms a regular sequence on $\cM(*D)$. Moreover, $t_i$ are invertible in $\cO_X(*D)$, we just need to show (\ref{E: V resp to D}) for $-\2 \leq \bd \leq -\1$. Since $V^D_{-\1}\cM(*D)=\cM$, by induction on the number of components of $D$, we just need to show $\bigcap_i \cM(-D_i)=\cM(-D).$ By induction again, we just need to show $\cM(-D_1)\cap \cM(-D+D_1)=\cM(-D).$ The $\supset$ part is obvious, so we argue the $\subset$ part. Let $m\in \cM(-D_1)\cap \cM(-D+D_1)$, so we can write $m=t_2\cdot...\cdot t_k\cdot n$, for some $n\in \cM$. Since $\{t_2,.., t_k\}$ forms a regular sequence on $\cM/\cM(-D_1)$, we get that $n\in \cM(-D_1)$, which implies $m\in \cM(-D)$.
\end{proof}

Now, we are ready to give:

\begin{proof}[Proof of Theorem  \ref{T: main 2} (in the setting of Theorem \ref{T: main})]
    Recall that we have a $\C$-mixed Hodge module $\cM$ on $X$ satisfying the conditions in the previous two propositions. $\cL$ is a line bundle on $X$ satisfying $\cL (-\bd D')$ is ample as a $\R$-divisor, for some $\0\leq \bd\leq \1$. 
    
    We first consider the case that $D=D'$, and since $T^p_X(-\log D)$ is a trivial bundle, we just need to show that 
    \begin{equation}
        H^k(X, \Gr^F_i\cM \otimes \cL)=0, \text{for } i\in \Z, k\geq 1. \label{E: D=D' case}
    \end{equation}
    Note that, in the case that $\Gr^F_i \cM$ is the lowest graded piece, it is just the Kawamata-Viehweg vanishing in the setting of mixed Hodge modules, e.g. \cite{Suh18}, \cite{Wu22}, which need not the toric assumptions. For the general pieces, we can first uniquely write $\bd D=\bb B+ C$, with $D=B+C$ and  $\0\leq \bb <\1$. Due to Proposition \ref{P: multi-V for M(*D)}, we have $V^B_{<-\bb}V^C_{-\1}\cM(*D)\simeq \cM$.  Apply \cite[Theorem 1.2]{Wei23}, we get the vanishings for the twisted graded de Rham complex:
    $$\H^k(\DR_{(X,D)}\Gr^F_\bullet \cM^{}(*D)\otimes \cL)=0, \text{for } k>0.
    $$
    Using the fact that $T^p_X(-\log D)$ being trivial, we can conclude (\ref{E: D=D' case}) by induction on the grading of $\Gr^F_\bullet$. Such argument can be found in \cite[Lemma 2.5]{PS13} and \cite[Corollary 21]{W17b}. 

    Then, we want to apply the induction as we did for the proof of Theorem \ref{T: main 1}. Let $H$ be an irreducible component of $D-D'$. Using (\ref{E: 1}) with $\cE=\omega_X^{-1}$, we have the short exact sequence:
    $$0\to T^p_X(-\log (D'+H))\to T^p_X(-\log D')\to T^p_H(-\log D_H)\otimes \cO_X(H)|_H\to 0,
    $$
    which induces the following short exact sequence,
    $$0\to T^p_X(-\log (D'+H))\Gr^F_\bullet \cM\to T^p_X(-\log D')\Gr^F_\bullet \cM \to T^p_H(-\log D_H)\Gr^F_\bullet \cM_H\to 0.
    $$
    Twisting it by $\cL$, we can conclude the proof by induction on the number of components of $D-D'$ and the dimension of $X$.
\end{proof}

\section{A counter-example for relative Bott Vanishing for birational maps}
This section is dedicated to give a counter-example to \cite[Conjecture 4.1]{Tot23}.

Let $Z=\A^3$. Let $g:Y \to Z$ be the blowup at the origin, with the exceptional divisor $P\simeq \P^2$ on $Y$. Let $f:X \to Y$ be the blowup of a smooth curve $F$ on $P$ of degree $d$, and denote the exceptional divisor on $X$ by $E$, which is a $\P^1$ bundle over $F$ that is isomorphic to $\P(\cN^*_{F/Y})$.

To understand the curve classes on $E$, we consider the short exact sequence 
\begin{equation}\label{E:SES for N}
0\to \cN^*_{P/Y}|_F\to\cN^*_{F/Y}\to \cN^*_{F/P}\to 0.
\end{equation}
(It can be induced by $\cN^*_{F/Y}\to \Omega^1_Y |_F\to \Omega^1_F$, and its counterpart for $F/P$.) We get 
\begin{equation}\label{E: deg of normal bund}
    \deg(\wedge^2 \cN^*_{F/Y})=\deg(\cN^*_{P/Y}|_F)+\deg(\cN^*_{F/P})=d-d^2.
\end{equation}
 By twisting $(\cN^*_{P/Y}|_F)^{-1}$ on (\ref{E:SES for N}), we get that $e=d^2+d$ for $E$, using the notation in \cite[V Notation 2.8.1.]{Har77}. This allows us to apply \cite[V Prop 2.20]{Har77}.
Let $D=-E-f^*(d+1)P$, and denote $a$ and $b$ the curve classes that are contracted by $f$ and $g$ respectively. We have $a\cdot D=b\cdot D=1$. According to \emph{op. cit.}, this implies $D$ is relatively ample. We want to argue that, when $d$ is sufficient large,
\begin{equation}\label{E: main}
R^1(g\circ f)_*(\Omega^2_X(-E-f^*(d+1)P))\neq 0, 
\end{equation}
which implies relative Bott Vanishing fails.

Since relative Bott Vanishing holds for a blowup along a smooth locus, \cite[Corollary 4.4]{Tot23}, we have $R^i f_*\Omega^2_X(-E)=0$, for $i>0$. Hence, by projection formula and Leray spectral sequence, to get (\ref{E: main}), we only need to show
\begin{equation}\label{E:fir}
    R^1 g_*(f_*(\Omega^2_X(-E))(-(d+1)P))\neq 0
\end{equation}

We have a short exact sequence, (Corollary 6.2 of Totaro's paper)
$$0\to f_*\Omega^2_X(-E) \to \Omega^2_Y \to \Omega^1_F\otimes \cN^*_{F/Y}\to 0.$$
Combining it with the short exact sequence 
$$0\to \wedge^2 \cN^*_{F/Y}\to \Omega^2_Y |_F\to \Omega^1_F\otimes \cN^*_{F/Y}\to 0,
$$
(which is induced by the short exact sequence $0\to \cN^*_{F/Y}\to \Omega^1_Y |_F\to \Omega^1_F\to 0$,)
and 
$$0\to \Omega^2_Y\otimes \cI_F\to\Omega^2_Y\to \Omega^2_Y |_F\to 0,
$$
and using snake lemma, we have a short exact sequence
$$0\to \Omega^2_Y\otimes \cI_F\to f_*\Omega^2_X(-E) \to \wedge^2 \cN^*_{F/Y}\to 0.
$$
Recall (\ref{E: deg of normal bund}), $\deg(\wedge^2 \cN^*_{F/Y})=d-d^2$. Set $A$ a divisor on $F$ of degree $d(d+1)$, such that its corresponding line bundle is isomorphic to $\cO_Y(-(d+1)P)|_F\simeq \cO_P(d+1)|_F$. Then, set $\cL= \wedge^2 \cN^*_{F/Y}(A)$, and $\deg \cL= 2d$. By Serre duality, $h^1(F, \cL)=h^0(F, \omega_F\otimes \cL^{-1})$. By Riemann-Roch, we have 
$$h^0(F, \omega_F\otimes \cL^{-1})=h^0(F, \cL)-(2d-g+1),$$
Using genus-degree formula, 
$$h^0(F, \omega_F\otimes \cL^{-1})\geq -2d+ \frac{1}{2}(d-1)(d-2)-1>0,$$
when $d\geq 8$. In such case, we get 
$$H^1(F, \wedge^2 \cN^*_{F/Y}(A))\neq 0.$$

Now, to get (\ref{E:fir}), it is suffice to show that 
\begin{equation}\label{E:sec}
    R^2g_*(\Omega^2_Y(-(d+1)P)\otimes \cI_F)=0.
\end{equation}
Consider the short exact sequence 
$$0\to \Omega^2_Y(-(d+2)P) \to \Omega^2_Y(-(d+1)P)\otimes \cI_F \to \Omega^2_Y(-(d+1)P)\otimes \cO_P(-F)\to 0.
$$
It is induced by $0\to \cO_Y(-P)\to \cI_F \to \cO_P(-F)\to 0$.
Due to relative Bott vanishing, (or just relative Nakano vanishing in this case,) $R^2g_*(\Omega^2_Y(-(d+2)P))=0$, and note that $\Omega^2_Y(-(d+1)P)\otimes \cO_P(-F)\simeq \Omega^2_Y|_P\otimes \cO_P(1)$, 
hence to achieve (\ref{E:sec}), it is suffice to show 
\begin{equation}\label{E:thi}
    R^2g_*(\Omega^2_Y|_P\otimes \cO_P(1))=0
\end{equation}

Use the short exact sequence
$$ 0\to \Omega^2_Y(-2P) \to \Omega^2_Y(-P) \to \Omega^2_Y|_P\otimes \cO_P(1)\to 0,
$$
and relative Bott vanishing, we get (\ref{E:thi})

For completeness, let's also show, for $i\geq 1$
\begin{equation}\label{E:main'}
    R^i(g\circ f)_*(\Omega^1_X(-E-f^*(d+1)P))=0, 
\end{equation}
Due to relative Nakano vanishing and projection formula, it is equivalent to 
\begin{equation}\label{E:fir'}
    R^i g_*(f_*(\Omega^1_X(-E))(-(d+1)P))=0.
\end{equation}
Due to (Corollary 6.2 of Totaro's paper), we have
$$ f_*\Omega^1_X(-E) \simeq\Omega^1_Y\otimes\cI_F,
$$
so we just need to show 
$$R^i g_*(\Omega^1_Y(-(d+1)P)\otimes\cI_F)=0
$$
Then, it follows via an essentially same argument as the proof for (\ref{E:sec}).

\bibliographystyle{alpha}
\bibliography{bib}

\end{document}